\documentclass[11pt]{amsart}
\usepackage[margin=1in]{geometry}
\usepackage{amsmath}
\usepackage{amsfonts,amsmath}
\usepackage{paralist}
\usepackage[colorlinks=true]{hyperref}
\hypersetup{urlcolor=blue, citecolor=red}
\numberwithin{equation}{section}
\newtheorem{theorem}{Theorem}[section]

\newtheorem{lemma}[theorem]{Lemma}

\theoremstyle{definition}
\newtheorem{definition}[theorem]{Definition}

\newcommand*{\avint}{\mathop{\ooalign{$\int$\cr$-$}}}
\newcommand{\ot}{\Omega_T }
\newcommand{\pot}{\partial_p\Omega_T }
\newcommand{\po}{\partial\Omega}
\newcommand{\mdiv}{\textup{div}}

\newcommand{\nvp}{\nabla\varphi}
\newcommand{\vp}{\varphi}
\newcommand{\pt}{\partial_t}
\newcommand{\ep}{\varepsilon}
\newcommand{\su}{\sigma(u)}

\newcommand{\io}{\int_{\Omega}}
\newcommand{\ioT}{\int_{\Omega_{T}}}
\newcommand{\aiy}{\avint_{B_r(y)}  }

\title[ a nonlinear elliptic-parabolic system with oscillating electrical conductivity  
] 
{ Regularity results for a nonlinear elliptic-parabolic system with oscillating coefficients
}

\author[Xiangsheng Xu]{}

\subjclass{Primary: 35B45, 35B65, 35M33, 35Q92.}
\keywords{Oscillating coefficients, the thermistor problem, quadratic nonlinearity.}
\email{xxu@math.msstate.edu}



\begin{document}
	\maketitle
	
	\centerline{\scshape Xiangsheng Xu}
	\medskip
	{\footnotesize
		\centerline{Department of Mathematics \& Statistics}
		\centerline{Mississippi State University}
		\centerline{ Mississippi State, MS 39762, USA}
	} 

	\bigskip

		\begin{abstract}
		In this paper we study the initial boundary value problem for the system $\mbox{div}(\sigma(u)\nabla\varphi)=0,\ \ u_t-\Delta u=\sigma(u)|\nabla\varphi|^2$. This problem is known as the thermistor problem which models the electrical heating of conductors. Our assumptions on $\sigma(u)$ leave open the possibility that  $\liminf_{u\rightarrow\infty}\sigma(u)=0$, while $\limsup_{u\rightarrow\infty}\sigma(u)$ is large. This means that $\sigma(u)$ can oscillate wildly between $0$ and a large positive number as $u\rightarrow \infty$. Thus our degeneracy is fundamentally different from the one that is present in porous medium type of equations. We obtain a weak solution $(u, \varphi)$ with $|\nabla \varphi|, |\nabla u|\in L^\infty$ by first establishing a uniform upper bound for $e^{\varepsilon u}$ for some small $\varepsilon$. This leads to an inequality in $\nabla\varphi$, from whence follows the regularity result. This approach enables us to avoid first proving the H\"{o}lder continuity of $\varphi$ in the space variables, which would have required that the elliptic coefficient $\sigma(u)$ be an $A_2$ weight.  As it is known, the latter implies that $\ln\sigma(u)$ is ``nearly bounded''.
		
	\end{abstract}

	\section{Introduction}
Let $\Omega$ be a bounded domain in $\mathbb{R}^N$ with sufficiently smooth boundary $\po$ and $T$ any positive number. We consider the initial boundary value problem
\begin{eqnarray}
u_t-\Delta u&=&\su|\nvp|^2\ \ \mbox{in $\ot$},\label{me1}\\
\mdiv(\su\nvp)&=& 0\ \ \mbox{in $\ot$},\label{me2}\\
u&=&u_0\ \ \ \mbox{on $\partial_p\ot$},\label{me3}\\
\varphi&=&\varphi_0\ \ \mbox{on $\Sigma_T$,}\label{me4}
\end{eqnarray}
where 
\begin{eqnarray}
\ot&=&\Omega\times(0,T),\\
\Sigma_T&=&\po\times(0,T),\ \mbox{the lateral boundary of $\ot$, and}\\ \pot&=&\Sigma_T\cup \Omega\times\{0\},\ \mbox{the parabolic boundary of $\ot$}.
\end{eqnarray}
We are interested in the regularity properties of weak solutions  when the elliptic coefficient $\sigma(u)$ in the second equation may become oscillatory as $u\rightarrow \infty$. 
To be precise, we establish the following
\begin{theorem}[Main Theorem]\label{thm11}
	Assume:
	\begin{enumerate}
		\item[\textup{(H1)}] the function $\sigma$ is continuously differentiable on the interval $[0,\infty)$ with
		\begin{eqnarray}\label{cons}
		c_0e^{-\beta s}&\leq& \sigma(s)\leq c_1\ \ \mbox{on $[0,\infty)$ for some $c_0,c_1, \beta\in (0,\infty)$ and}\\
		|\sigma^\prime(s)|&\leq& c_2e^{\gamma s}\ \ \mbox{ on $[0,\infty)$ for some $c_2, \gamma\in (0,\infty)$};
		\end{eqnarray}
		\item[\textup{(H2)}] $ u_0, \varphi_0\in C\left([0,T]; C^1(\overline{\Omega})\right)$ with $u_0\mid_{\partial_p\ot}\geq 0$, $\partial_tu_0\in L^2(\ot)$, and $\Delta\varphi_0\in L^\infty(0,T;L^s(\Omega)) $ for each $s>1$;
		\item[\textup{(H3)}]  $\po$ is $C^{1,1}$.
	\end{enumerate}
	Then there is a weak solution $(u,\varphi)$ to \eqref{me1}-\eqref{me4} with $u\geq 0$ and
	\begin{equation}\label{nunv1}
	\nabla u,\nabla\varphi\in L^\infty(\ot).
	\end{equation} 
\end{theorem}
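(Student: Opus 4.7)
My plan is to follow a standard approximation-plus-a-priori-estimate strategy, so the bulk of the work goes into the a priori bounds. Regularize $\sigma$ by smooth truncations $\sigma_n$ still satisfying (H1) uniformly in $n$, and construct classical solutions $(u_n,\varphi_n)$ of the resulting system via Galerkin approximation or a Schauder fixed point. The maximum principle on \eqref{me1} gives $u_n\ge 0$ (since $u_0\ge 0$ and the source is nonnegative), while testing \eqref{me2} against $\varphi_n-\varphi_0$ yields the baseline energy $\int_\Omega \sigma_n(u_n)|\nabla\varphi_n|^2\le C$, uniform in $t$ and~$n$. All subsequent estimates are derived for $(u_n,\varphi_n)$ uniformly in~$n$, and the solution is obtained by passing to the limit using Aubin--Lions compactness.

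The key algebraic observation is the identity
\[
\sigma(u)|\nabla\varphi|^2 \;=\; \tfrac12\,\textup{div}\bigl(\sigma(u)\,\nabla\varphi^2\bigr),
\]
which follows from \eqref{me2} by expanding $\textup{div}(\sigma(u)\nabla\varphi^2)=2\varphi\,\textup{div}(\sigma(u)\nabla\varphi)+2\sigma(u)|\nabla\varphi|^2$. It rewrites \eqref{me1} as $u_t = \textup{div}\bigl(\nabla u + \sigma(u)\varphi\,\nabla\varphi\bigr)$, putting the nonlinear source in divergence form, which is precisely what allows nonlinear test functions $e^{p\varepsilon u}$ to yield coercive gradient terms on the left. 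The core step is then to test the heat equation in this form against $(p\varepsilon)^{-1}e^{p\varepsilon u}$ and simultaneously to test \eqref{me2} against $(\varphi-\varphi_0)e^{p\varepsilon u}$. The latter produces, after Young's inequality,
\[
\int_\Omega \sigma(u)e^{p\varepsilon u}|\nabla\varphi|^2 \;\le\; C\!\int_\Omega \sigma(u)e^{p\varepsilon u}|\nabla\varphi_0|^2 \;+\; Cp^2\varepsilon^2\!\int_\Omega \sigma(u)e^{p\varepsilon u}|\nabla u|^2,
\]
and, using $\sigma\le c_1$ from (H1), the $|\nabla u|^2$ term can be absorbed into the coercive term $p\varepsilon\!\int e^{p\varepsilon u}|\nabla u|^2$ provided $p\varepsilon$ is small. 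For $\varepsilon$ small enough (depending only on the data), Gronwall gives $L^p$ bounds on $e^{\varepsilon u_n}$, and an iteration argument --- of Moser or De~Giorgi type --- upgrades these to a pointwise bound $e^{\varepsilon u_n}\le M$.

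Once $u_n\le K_0:=\varepsilon^{-1}\log M$ uniformly in~$n$, (H1) gives $\sigma(u_n)\ge c_0 e^{-\beta K_0}>0$, so \eqref{me2} becomes uniformly elliptic with coefficient bounded above and below, and $\sigma'(u_n)/\sigma(u_n)$ is bounded as well. Writing \eqref{me2} equivalently as $\Delta\varphi_n = -\tfrac{\sigma'(u_n)}{\sigma(u_n)}\nabla u_n\cdot\nabla\varphi_n$, I would combine a Meyers-type improvement pushing $\nabla\varphi_n$ into $L^{2+\delta}$ with a Calder\'on--Zygmund bootstrap to upgrade $\nabla\varphi_n$ to~$L^\infty$. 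With $\sigma(u_n)|\nabla\varphi_n|^2\in L^\infty$, linear parabolic $W^{2,p}$ theory together with (H2)--(H3) delivers $\nabla u_n\in L^\infty$, and the limit procedure then concludes the proof.

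The main obstacle I anticipate is Step~3, the $L^\infty$ control of $e^{\varepsilon u}$. The constants of order $p^2\varepsilon^2$ produced by the test $(\varphi-\varphi_0)e^{p\varepsilon u}$ must be absorbed by the $p\varepsilon$-coercivity from the heat equation, which caps $p\varepsilon$ at a universal constant and therefore blocks a naive $p\to\infty$ Moser iteration at fixed~$\varepsilon$. The cleanest circumvention is likely a De~Giorgi truncation on $(e^{\varepsilon u}-K)_+$, which yields the pointwise bound directly and bypasses the $L^p\to L^\infty$ step; alternatively, one can interleave Moser iteration with intermediate Meyers-type improvements of $\nabla\varphi$ that feed improved source integrability back into the heat estimate. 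Balancing the small universal $\varepsilon$ dictated by (H1) against the iteration scheme is where the real technical work lies.
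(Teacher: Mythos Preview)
Your first two steps---the divergence-form rewriting of the Joule source and the exponential integrability $\sup_t\int_\Omega e^{mu}\,dx\le c$ for small $m$---are correct and match the paper's Lemma~\ref{expb}. The gap is in Step~3, and it is exactly the point you flag as the ``main obstacle'': you assert that a De~Giorgi truncation on $(e^{\varepsilon u}-K)^+$ ``yields the pointwise bound directly,'' but it does not. The equation for $w=e^{\varepsilon u}$ reads $w_t-\Delta w\le \varepsilon\sigma(u)|\nabla\varphi|^2\,w$, so testing against $(w-k_{n+1})^+$ produces the source $\int\sigma(u)|\nabla\varphi|^2\,w(w-k_{n+1})^+$. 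To run the iteration you must control this by H\"older against $\|\sigma(u)|\nabla\varphi|^2\|_{\ell/(\ell-1)}$ with $\ell/(\ell-1)>(N+2)/2$, i.e.\ you need $\|\nabla\varphi\|_q$ for some $q>N+2$; the basic energy estimate only gives a weighted $L^2$ bound, and the structural trick (testing \eqref{me2} against $\varphi\cdot g(w)$) reproduces terms of the same order rather than absorbing them. Your Meyers alternative is circular: a Meyers improvement on \eqref{me2} requires a uniform lower bound on $\sigma(u)$, which under (H1) is exactly $u\in L^\infty$.

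The paper resolves this not by decoupling but by accepting the circularity and closing it with a small-time continuity argument. The De~Giorgi lemma (Lemma~\ref{pib}) gives $\|e^{\varepsilon u}\|_\infty\le cT^{1/(2\ell)}\|\nabla\varphi\|_{2\ell/(\ell-1)}^{(N+2)/(N+2-N\ell)}+c$; a heat-kernel estimate (Lemma~\ref{nunv}) gives $\|\nabla u\|_\infty\le cT^{(r-N)/2}\|\nabla\varphi\|_\infty^2+c$; and Calder\'on--Zygmund on $\Delta\varphi=-(\sigma'/\sigma)\nabla u\cdot\nabla\varphi$ (using $|\sigma'/\sigma|\le ce^{(\beta+\gamma)u}$) gives $\|\nabla\varphi\|_\infty\le c\|e^{(\beta+\gamma)u}\nabla u\|_\infty^{p/2}+c$. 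Chaining these yields a self-referential inequality $\|\nabla\varphi\|_{\infty,\Omega_T}\le cT^a\|\nabla\varphi\|_{\infty,\Omega_T}^b+c$ with $b>1$ and $c$ independent of $T\le 1$. For $T$ small the function $g(\tau)=cT^a\tau^b-\tau+c$ has a negative minimum, so continuity in $t$ of $\|\nabla\varphi\|_{\infty,\Omega\times[0,t]}$ traps it below the first root; one then iterates in time. The missing idea in your outline is precisely this coupled small-time barrier argument---you cannot first bound $u$ and then bootstrap $\nabla\varphi$, because under (H1) neither bound is available without the other.
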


The notion of a weak solution is defined as follows:
\begin{definition}
	We say that $(u,\vp)$ is a weak solution to \eqref{me1}-\eqref{me4} if
	\begin{enumerate}
		\item[\textup{(D1)}] $u, \vp\in L^2(0,T; W^{1,2}(\Omega))$;
		\item[\textup{(D2)}] $u=u_0, \vp=\vp_0$ on $\Sigma_T$ in the sense of the trace theorem and 
		\begin{eqnarray}
		-\ioT u\xi_tdxdt+\ioT\nabla u\nabla\xi dxdt&=&\ioT\su|\nvp|^2dxdt+\io u_0(x,0)\xi(x,0)dx,\\
		\ioT\su\nvp\nabla\xi dxdt&=&0
		\end{eqnarray}
		for each smooth function $\xi$ with $\xi\mid_{\Sigma_T}=0$ and $\xi(x, T)=0$.
	\end{enumerate}
\end{definition}
We quickly offer another perspective on the initial condition for $u$. The weak maximum principle asserts that
\begin{equation}\label{wmp}
\|\vp\|_{\infty,\ot}\leq \|\vp_0\|_{\infty,\ot}.
\end{equation} We can easily derive from \eqref{me2} that
\begin{equation}
\ioT\su|\nvp|^2\xi dxdt=-\ioT\su\vp\nvp\nabla\xi dxdt \ \ \mbox{for each $\xi$  with $\xi\mid_{\Sigma_T}=0$.}
\end{equation}
This together with \eqref{me1} implies that $u_t\in L^2(0,T; W^{-1,2}(\Omega))$. Thus we can conclude that $u\in C([0,T], L^2(\Omega))$. The initial condition $u(x,0)=u_0(x,0)$ can also be understood to hold in this space.

Physically, problem \eqref{me1}-\eqref{me4} may be proposed as a model for the electrical heating of a conductor, the so-called thermistor problem.
In this case $u$ is the temperature and $\varphi$ the electrical potential  of the conductor.
The heat source is the Joule heating $\su\nvp\cdot\nvp$, where  $\su$ is the temperature-dependent electrical conductivity.  We have taken the thermal conductivity to be $1$. 
There is a large body of literature devoted to the study of \eqref{me1}-\eqref{me4} under various assumptions on $\sigma(s)$ and the boundary conditions and various extensions. For the mathematical analysis of the associated stationary problem, we would like to mention \cite{C,CP,HRS}.
Modeling and numerical simulations were investigated in \cite{ALZ,KBO,WK}. For optimal control issues, we refer the reader to \cite{HK} and the references therein. Also see \cite{ALM} and its references for obstacle thermistor problems. 
Of course, there are many more papers that we have failed to mention, and it is simply beyond the scope of this paper to give a comprehensive review of the current research in this area.  

A very important issue about the time-dependent problem is: How does one prevent the thermal run-away from occurring? The blow-up of solutions was studied in \cite{AC}. In applications, blow-up of solutions are not welcome in general. Thus we will focus our attention on the boundedness of $u$. If $\sigma$ is also bounded away from $0$ below, then \eqref{me2} becomes uniformly elliptic and one has
\begin{equation}\label{phc}
\varphi\in L^\infty(0, T; C^\alpha(\overline{\Omega}))\ \ \mbox{for some $\alpha\in (0,1)$.} 
\end{equation}
This combined with a result in \cite{Y}  asserts that $u$ is H\"{o}lder continuous in $\overline{\ot}$, from whence follows
that for each $p>1$ there is a positive number $c$ depending on the continuity of $\su$ and $C^{1,1}$ boundary such that
\begin{equation}\label{r21}
\|\nvp\|_{p,\ot}\leq c\|\nvp_0\|_{p,\ot}
\end{equation}
(\cite{R}, p.82).
We can easily infer from the proof of Lemma \ref{nunv} below that \eqref{r21} implies $|\nabla u|\in L^\infty(\ot)$. Now write \eqref{me2} in the form
\begin{equation}\label{efvp}
\Delta\varphi=-\frac{\sigma^\prime(u)}{\su}\nabla u\cdot\nvp.
\end{equation}
This puts us in a position to apply the classical Calder\'{o}n-Zygmund estimate \cite{CFL}. Upon doing so, we establish \eqref{nunv1}.

Under (H1), the problem immediately becomes very delicate because we have to leave open the possibility that $u$ is not bounded above. The reason is simple: The term on the right-hand of \eqref{me1} is only an $L^1$ function from the usual energy estimates. Consequently, \eqref{me2} could become degenerate and a priori estimates are difficult to obtain. In fact, even an $L^p$, $p\geq 1$, estimate for $\nabla\varphi$ is unlikely unless additional assumptions on $\sigma$ are made \cite{X1,X2}. Furthermore,  assumption (H1) allows the possibility that
\begin{equation}\label{osc}
\limsup_{s\rightarrow\infty}\sigma(s)>0\ \ \mbox{and}\ \ \liminf_{s\rightarrow\infty}\sigma(s)=0
\end{equation}
hold simultaneously. This means that the function $\sigma(s)$ can oscillate wildly between $0$ and a positive number as $s\rightarrow\infty$. We can easily come up with an example of such functions. Say, 
\begin{equation*}
\sigma(s)=c_3(1+\sin e^{\gamma s})+c_0e^{-\beta s}, \ \ c_3>0.
\end{equation*}
By virtue of the classical regularity theory \cite{HKM} for degenerate and/or singular elliptic equations of the type \eqref{me2}, $\su$ must be an $A_2$-weight for \eqref{phc} to hold. We say that $\su$ is an $A_2$-weight if there is a positive number $c$ such that
\begin{equation}\label{atw}
\aiy\su dx\aiy\frac{1}{\su}dx\leq c\ \ \mbox{for all $y\in \Omega, r>0$ with $B_r(y)\subset\Omega$,}
\end{equation} 
where $B_r(y)$ denotes the open ball centered at $y$ with radius $r$.
A theorem in (\cite{ST}, p.141) asserts that
a function $f$ is an $A_2$ weight if and only if  $\ln f$ belongs to BMO. The latter implies that over any ball, the average oscillation of $\ln f$ must be bounded. In the situation considered here, to obtain \eqref{atw} we have to assume  $\su= e^{-c u}$ for some $c>0$ according to a result in \cite{X4}. 
In general, our main theorem seems to lie outside the scope of \cite{HKM}. This is the main motivation for our study.
In a series of three papers (\cite{X1}-\cite{X3}), the author obtained the boundedness of $u$ under the assumptions that the given function  $\sigma(s)$  has the properties:
\begin{enumerate}
	\item[(C1)] $\sigma(s)$ is continuous, positive, and bounded above;
	\item[(C2)] $\lim_{s\rightarrow\infty}\sigma(s)=0$; and
	\item[(C3)] $\lim_{\tau\rightarrow 0^+}\frac{\sigma(s+\tau)}{\sigma(s)}=1$ uniformly on $[0,\infty)$.
\end{enumerate}
In particular, condition (C2) is essential to the argument there. We have managed to remove this condition here, thereby allowing oscillation in $\sigma$.
A result in \cite{X3} asserts that (C3) implies that $\su$ is bounded below by an exponential function. Thus we have also weaken (C3) substantially.
The trade-off for us is that we have to assume that  $\sigma$ is continuously differentiable. 


%

Recall that solutions to the initial boundary value problem for the
equation $u_t-\Delta u=\su$ can blow up in finite time when $\su$ is superlinear, i.e.,
\begin{equation*}
\lim_{u\rightarrow\infty}\frac{\su}{u}=\infty,\ \ \int^{\infty}\frac{1}{\su}du<\infty.
\end{equation*}
See, for example, \cite{CF}. It would be interesting to know if we can allow $\su$ to be bounded above by a linear function. 

The difficult features in our problem are the possible oscillation of $\su$ and the exponential growth conditions we impose on $\frac{1}{\sigma}, \sigma^\prime$. They prevent us from employing the traditional approach of going from lower regularity to higher one. Instead, we will prove \eqref{nunv1} directly. This is done by
obtaining a uniform upper bound for $e^{\ep u} $ for $\ep$ sufficiently small. The idea is motivated by a recent paper of the author \cite{X5}. Then we bound $\nabla u$ by $\nvp$, and vice versa, thereby establishing an inequality in $\|\nvp\|_{\infty,\ot}$. This enables us to prove existence for $T$ suitably small. Then we further show that we can extend our solution in the time direction as far away as we want. 

This work is organized as follows. Section 2 is largely preparatory. We collect some relevant known results. The proof of the main theorem is contained in Section 3.  

We follow the well-established notation convention whenever possible. Therefore, throughout this paper, the letter $c$ will be used to denote a positive number that depends only on the given data unless stated otherwise. The dot product of two column vectors $\mathbf{F}, \mathbf{G}$ is denoted by $\mathbf{F}\cdot\mathbf{G}$. When we apply the Sobolev embedding theorem, we only deal with the case $N>2$. The case $N=2$ can be handled similarly.

\section{Preliminaries}
In this section we collect some known results for later use. We begin with Gr\"{o}nwall's inequality.
\begin{lemma} Suppose that a differentiable function $h(t)$ satisfies the inequality
	\begin{equation*}
	h^\prime(t)\leq ch(t)+ g(t)\ \ \mbox{on $[0,\infty)$},
	\end{equation*}
	where $c$ is a constant and $g(t)$  a locally integrable function.
	Then
	\begin{equation*}
	h(t)\leq h(0)e^{ct}+\int_{0}^{t}g(\tau)e^{c(t-\tau)}d\tau.
	\end{equation*}
\end{lemma}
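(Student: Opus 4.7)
The plan is to use the standard integrating factor trick. Starting from the hypothesis $h^\prime(t)\leq ch(t)+g(t)$, I would subtract $ch(t)$ from both sides and multiply by the positive factor $e^{-ct}$ to obtain
\begin{equation*}
e^{-ct}h^\prime(t)-ce^{-ct}h(t)\leq e^{-ct}g(t).
\end{equation*}
The left-hand side is precisely the derivative of the product $e^{-ct}h(t)$, so the inequality becomes
\begin{equation*}
\frac{d}{dt}\!\left(e^{-ct}h(t)\right)\leq e^{-ct}g(t).
\end{equation*}

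Next I would integrate this inequality from $0$ to $t$. Since $g$ is locally integrable on $[0,\infty)$, the right-hand side is integrable on $[0,t]$, and the fundamental theorem of calculus applied to the differentiable function $e^{-c\tau}h(\tau)$ yields
\begin{equation*}
e^{-ct}h(t)-h(0)\leq \int_0^t e^{-c\tau}g(\tau)\,d\tau.
\end{equation*}
Finally, multiplying both sides by $e^{ct}$ (which preserves the inequality) and distributing the factor inside the integral gives
\begin{equation*}
h(t)\leq h(0)e^{ct}+\int_0^t g(\tau)e^{c(t-\tau)}\,d\tau,
\end{equation*}
which is the desired estimate.

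Since the statement is a classical result with a one-line proof via integrating factors, I do not anticipate any genuine obstacle. The only point one might need to be mildly careful about is the regularity required to justify the fundamental theorem of calculus step: $h$ is assumed differentiable so $e^{-ct}h(t)$ is differentiable with derivative $e^{-ct}(h^\prime(t)-ch(t))$, and this derivative is bounded above by a locally integrable function, which is enough to integrate the inequality pointwise in the ordinary Riemann/Lebesgue sense. No additional machinery is needed.
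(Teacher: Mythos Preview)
Your proof is correct and is the standard integrating-factor argument for Gr\"{o}nwall's inequality. The paper does not actually prove this lemma---it is stated in the preliminaries as a known result---so there is nothing to compare against, and your argument would serve perfectly well as the omitted proof.
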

We also need the interpolation inequality
\begin{equation}\label{inter}
\|u\|_q\leq \varepsilon\|u\|_r+\varepsilon^{-\mu}\|u\|_\ell,
\end{equation}
where $1\leq \ell\leq q\leq r$ with $\mu=\left(\frac{1}{\ell}-\frac{1}{q}\right)/\left(\frac{1}{q}-\frac{1}{r}\right)$.

The next two lemmas deals with sequences of nonnegative numbers
which satisfy certain recursive inequalities.
\begin{lemma}\label{ynb}
	Let $\{y_n\}, n=0,1,2,\cdots$, be a sequence of positive numbers satisfying the recursive inequalities
	\begin{equation*}
	y_{n+1}\leq cb^ny_n^{1+\alpha}\ \ \mbox{for some $b>1, c, \alpha\in (0,\infty)$.}
	\end{equation*}
	If
	\begin{equation*}
	y_0\leq c^{-\frac{1}{\alpha}}b^{-\frac{1}{\alpha^2}},
	\end{equation*}
	then $\lim_{n\rightarrow\infty}y_n=0$.
\end{lemma}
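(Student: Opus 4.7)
The plan is to prove by induction on $n$ that $y_n$ enjoys geometric decay of the form $y_n\leq y_0 q^n$ for a suitably chosen constant $q\in(0,1)$. Once this is established, $y_n\to 0$ is immediate, and the only real question is whether the quantitative smallness of $y_0$ assumed in the hypothesis is exactly what the induction demands.

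I would start by identifying the optimal $q$ through a balancing calculation. Inserting the ansatz $y_n\leq y_0 q^n$ into $y_{n+1}\leq c b^n y_n^{1+\alpha}$ yields
\[
y_{n+1}\leq c b^n y_0^{1+\alpha}q^{n(1+\alpha)} = y_0 q^{n+1}\cdot\bigl(c y_0^\alpha q^{-1}\bigr)\cdot(b q^\alpha)^n.
\]
To eliminate the $n$-dependence on the right and close the induction, the natural choice is $q=b^{-1/\alpha}$, which makes $bq^\alpha=1$; note that $q\in(0,1)$ since $b>1$. The residual requirement $cy_0^\alpha q^{-1}\leq 1$ then becomes $y_0^\alpha\leq c^{-1}b^{-1/\alpha}$, i.e.\ $y_0\leq c^{-1/\alpha}b^{-1/\alpha^2}$, which is exactly the standing hypothesis on $y_0$.

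With $q=b^{-1/\alpha}$ fixed, the formal induction is then routine: the base case $n=0$ is an identity, and the inductive step is precisely the display above. Consequently $y_n\leq y_0 b^{-n/\alpha}\to 0$ as $n\to\infty$. There is no serious obstacle to this argument; the only delicate feature is that the threshold $c^{-1/\alpha}b^{-1/\alpha^2}$ is forced on us by the requirement $bq^\alpha=1$, so the assumed bound on $y_0$ is tight for this line of proof.
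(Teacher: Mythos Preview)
Your argument is correct: the choice $q=b^{-1/\alpha}$ makes the induction close precisely under the stated smallness condition on $y_0$, and the geometric decay $y_n\le y_0\,b^{-n/\alpha}$ follows. The paper does not actually supply a proof of this lemma but merely cites DiBenedetto, \emph{Degenerate Parabolic Equations}, p.~12; your induction is exactly the standard one found there, so there is nothing to compare.
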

This lemma can be found in (\cite{D}, p.12).

	\begin{lemma}\label{small}
	Let $\alpha,\lambda\in (0,\infty)$ be given and $\{b_k\}$ a sequence of nonnegative numbers with the property
	\begin{equation}
	b_k\leq b_0+\lambda b_{k-1}^{1+\alpha}\ \ \textup{for $k=1,2,\cdots.$}
	\end{equation}
	If $2\lambda(2b_0)^\alpha<1$, then
	\begin{equation}
	b_k\leq \frac{b_0}{1-\lambda(2b_0)^\alpha}\ \ \textup{for all $k\geq 0$.}
	\end{equation}
\end{lemma}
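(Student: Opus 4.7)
\smallskip

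My plan is a direct induction on $k$, using as the candidate upper bound the very constant that appears on the right-hand side of the conclusion. Set
\[
M=\frac{b_0}{1-\lambda(2b_0)^\alpha}.
\]
The hypothesis $2\lambda(2b_0)^\alpha<1$ guarantees that the denominator lies in $(1/2,1]$, so $M$ is well-defined, positive, and satisfies the key inequality $M<2b_0$. A short algebraic manipulation also records the useful identity
\[
M-b_0=\frac{b_0\lambda(2b_0)^\alpha}{1-\lambda(2b_0)^\alpha}=M\lambda(2b_0)^\alpha,
\]
which will serve as the ``budget'' that absorbs the nonlinear term in the recursion.

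With these two observations in hand, the induction is essentially mechanical. The base case $k=0$ is immediate from the fact that the denominator in $M$ is less than one, so $b_0\leq M$. For the induction step, assume $b_{k-1}\leq M$. Then, using first the recursion, then the inductive hypothesis, then $M<2b_0$ (so $M^\alpha<(2b_0)^\alpha$), and finally the identity above, one gets
\[
b_k\leq b_0+\lambda b_{k-1}^{1+\alpha}\leq b_0+\lambda M^{1+\alpha}=b_0+\lambda M^\alpha\cdot M\leq b_0+\lambda(2b_0)^\alpha M=b_0+(M-b_0)=M,
\]
which closes the induction and gives the claimed bound for every $k\geq 0$.

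I do not foresee any real obstacle; the only subtle point is the seemingly wasteful factor of $2$ in the smallness hypothesis $2\lambda(2b_0)^\alpha<1$, which is exactly what forces $M<2b_0$ and thereby makes the crude estimate $M^\alpha<(2b_0)^\alpha$ strong enough to close the induction. A weaker hypothesis of the form $\lambda(2b_0)^\alpha<1$ would still make $M$ finite but would not supply the headroom needed to control $\lambda M^{1+\alpha}$ by $M-b_0$.
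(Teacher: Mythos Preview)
Your proof is correct and follows exactly the inductive route the paper has in mind; indeed, the paper merely states that the lemma ``can easily be established via induction'' and gives no further details, so your argument is precisely the fleshed-out version of that remark.
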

This lemma can easily be established via induction.


\section{Proof of the Main result}
The proof of the main theorem is divided into several lemmas. We assume that \eqref{me1}-\eqref{me4} has a weak solution $(u,\vp)$ with $u\in L^\infty(\ot)$. By our discussion in the introduction, this actually implies \eqref{nunv1} and more. We will indicate how we obtain such an (approximate) solution via the Leray-Schauder fixed point theorem near the end of the section. We shall begin with the exponential integrability of $u$ \cite{X10}.

\begin{lemma}\label{expb} For each $m\in (0, \frac{1}{c_1\|\varphi_0\|_{\infty,\Omega}})$ there is a positive number $c$ such that
	\begin{equation}\label{ueb}
	\sup_{0\leq t\leq T}\io e^{mu}dx+\ioT \left(e^{mu}|\nabla u|^2+\su e^{mu}|\nvp|^2\right)dxdt\leq c.
	\end{equation}
\end{lemma}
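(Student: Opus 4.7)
The plan is to combine two weighted energy estimates, one obtained from each of the two equations, testing each against an admissible function that vanishes on $\Sigma_T$ and carries an exponential weight $e^{mu}$ in the unknown. Since we are working with an approximate solution satisfying $u\in L^\infty(\ot)$, every quantity of the form $e^{mu}$ is a priori integrable; the whole point of the argument is to extract a bound that is independent of $\|u\|_{\infty,\ot}$ provided $m$ lies below the stated threshold.

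First I would test \eqref{me1} against $\xi_1 := \tfrac{1}{m}(e^{mu}-e^{mu_0})$, which vanishes on $\Sigma_T$ because $u=u_0$ there by (H2). Integration by parts in $t$ (using $u(\cdot,0)=u_0(\cdot,0)$) contributes $\tfrac{1}{m^2}\sup_t\io e^{mu}dx$ modulo lower-order terms that involve $\partial_t u_0$ and are controlled via (H2) together with Young's inequality and the elementary bound $u\le \tfrac{1}{m}e^{mu}$. Integration by parts in $x$ contributes $\ioT e^{mu}|\nabla u|^2 dxdt$, the cross term in $\nabla u_0$ being absorbed by Young and the boundedness of $u_0,\nabla u_0$. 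For the source term $\ioT\su|\nvp|^2\xi_1$ I would invoke the identity $\su|\nvp|^2=\mdiv(\su\vp\nvp)$ (valid by \eqref{me2}) and integrate by parts once more, and then use the weak maximum principle \eqref{wmp} together with Cauchy--Schwarz and $\sigma\le c_1$ to control this source by $\|\vp_0\|_\infty\sqrt{c_1}\bigl(\ioT\su e^{mu}|\nvp|^2\bigr)^{1/2}\bigl(\ioT e^{mu}|\nabla u|^2\bigr)^{1/2}$ plus terms in $\nabla u_0$ that are, once again, lower order.

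Next, to estimate the auxiliary quantity $\ioT\su e^{mu}|\nvp|^2$, I would test \eqref{me2} against $(\vp-\vp_0)e^{mu}$, which also vanishes on $\Sigma_T$. Expanding the resulting identity and applying Young's inequality together with $\|\vp-\vp_0\|_\infty\le 2\|\vp_0\|_\infty$ and $\sigma\le c_1$ yields an estimate of the form $\ioT\su e^{mu}|\nvp|^2\le C+Cm\|\vp_0\|_\infty c_1\ioT e^{mu}|\nabla u|^2$, where $C$ depends only on the data. Substituting this bound into the inequality produced in the first step and applying Young to the square-root cross term produces an inequality in which $\sup_t\io e^{mu}$, $\ioT e^{mu}|\nabla u|^2$, and $\ioT\su e^{mu}|\nvp|^2$ all appear on the left, while the only term on the right beyond the data is a constant multiple of $\ioT e^{mu}|\nabla u|^2$ whose coefficient is a polynomial in $mc_1\|\vp_0\|_\infty$. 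The hypothesis $mc_1\|\vp_0\|_\infty<1$ forces that coefficient strictly below the one on the left, which closes the estimate and yields \eqref{ueb}.

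I expect the main obstacle to be choosing the free Young parameters so that the absorption threshold matches the stated bound $m<1/(c_1\|\vp_0\|_\infty)$ exactly rather than merely producing some unspecified smallness condition, and keeping careful track of all the lower-order terms generated by the inhomogeneous time-dependent boundary data $u_0$ and $\vp_0$ so that they reduce to constants depending only on the quantities appearing in (H2).
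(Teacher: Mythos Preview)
Your approach is correct and follows essentially the same strategy as the paper: test both equations against exponentially weighted functions, use the identity $\su|\nvp|^2=\mdiv(\su\vp\nvp)$ on the source term, and combine so that the mixed $\nabla u\cdot\nvp$ terms form a quadratic that can be absorbed when $m$ is below threshold. The paper streamlines your bookkeeping by using the truncated test functions $(e^{mu}-e^{mK})^+$ in \eqref{me1} and $(e^{mu}-e^{mK})^+\vp$ in \eqref{me2} with $K\geq\|u_0\|_{\infty,\ot}$, which vanish on all of $\partial_p\ot$ and thereby eliminate the lower-order $\nabla u_0,\ \partial_t u_0,\ \nabla\vp_0$ cross terms you anticipate; note also that both your computation and the paper's actually produce the threshold $m<1/(c_1\|\vp_0\|_{\infty}^2)$ rather than $m<1/(c_1\|\vp_0\|_{\infty})$, so the missing square in the statement is a typo and your worry about matching the stated bound exactly is unfounded.
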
	
\begin{proof}
	The weak maximum principle asserts that
	\begin{equation*}
	\|\varphi\|_{\infty,\Omega}\leq \|\varphi_0\|_{\infty,\Omega}.
	\end{equation*}
	We use $\varphi-\varphi_0$ as a test function in \eqref{me2} to obtain
	\begin{eqnarray}
	\io \su|\nvp|^2dx&\leq& \io\su|\nabla\varphi_0|^2dx\leq c.
	\label{pb3}
	\end{eqnarray}
	On the other hand, use $u-u_0$ as a test function in \eqref{me1} to derive
	\begin{eqnarray}
\lefteqn{	\frac{1}{2}\frac{d}{dt}\io(u-u_0)^2dx+\io|\nabla(u-u_0)|^2dx}\nonumber\\
&=&\io\su|\nvp|^2dx+\io(-\pt u_0+\Delta u_0)(u-u_0)dx\nonumber\\
&	=&-\io\su\vp\nvp\nabla(u-u_0)dx-\io\nabla u_0\nabla(u-u_0)dx-\io\pt u_0(u-u_0)dx\nonumber\\
	&\leq &\frac{1}{2}\io|\nabla(u-u_0)|^2dx+\frac{1}{2}\io(u-u_0)^2dx+c+c\io\left((\pt u_0)^2+|\nabla u_0|^2\right)dx.
	\end{eqnarray}
	Use Gr\"{o}nwall's inequality to yield
	\begin{equation}\label{fix2}
	\sup_{0\leq t\leq T}\io u^2dx+\ioT|\nabla u|^2dxd\tau\leq ce^T+cT+c.
	\end{equation}
	Fix
	\begin{equation*}
	K\geq \|u_0\|_{\infty,\ot}.
	\end{equation*}
	For any $C^1$ function $f$ on $\mathbb{R}$ with
	\begin{equation*}
	f>0\ \ \mbox{and}\ \ f^\prime>0
	\end{equation*}
	we use $(f(u)-f(K))^+$ as a test function in \eqref{me1} to obtain
	\begin{equation*}
	\frac{d}{dt}\io\int_{0}^{u}(f(s)-f(K))^+dsdx+\int_{\{u\geq K\}}\left(f^\prime(u)|\nabla u|^2+\su\varphi\nvp f^\prime(u)\nabla u\right)dx=0.
	\end{equation*}
	On the other hand, use $(f(u)-f(K))^+\varphi$ as a test function in \eqref{me1} to yield
	\begin{eqnarray*}
		\int_{\{u\geq K\}}\left(f(u)\su|\nvp|^2+\su\varphi\nvp f^\prime(u)\nabla u\right)dx&=&f(K)\int_{\{u\geq K\}}\su|\nvp|^2dx\nonumber\\
		&\leq& cf(K).
	\end{eqnarray*}
	Combing the preceding two equations, we arrive at
	\begin{eqnarray}
	\lefteqn{	\frac{d}{dt}\io\int_{0}^{u}(f(s)-f(K))^+dsdx+\varepsilon\int_{\{u\geq K\}}\left(f^\prime(u)|\nabla u|^2+f(u)\su|\nvp|^2\right)dx}\nonumber\\
	&&+\int_{\{u\geq K\}}\left((1-\varepsilon)f^\prime(u)|\nabla u|^2+2\su\varphi\nvp f^\prime(u)\nabla u+(1-\varepsilon)f(u)\su|\nvp|^2\right)dx\nonumber\\
	&\leq& cf(K),\label{cons2}
	\end{eqnarray}
	where $\varepsilon\in (0,1)$.
	The last integrand in the above inequality is non-negative if $f$ is so chosen that
	\begin{equation}\label{con}
	\frac{f^\prime(u)}{f(u)}\leq \frac{(1-\varepsilon)^2}{c_1\|\varphi_0\|_{\infty,\Omega}^2}\leq\frac{(1-\varepsilon)^2}{\su\varphi^2}.
	\end{equation}
	We take 
	\begin{equation*}
	f(s)=e^{mu}.
	\end{equation*}
	For \eqref{con}to hold for $\varepsilon$ sufficiently small, it is enough to take
	\begin{equation*}
	m<\frac{1}{c_1\|\varphi_0\|_{\infty,\Omega}^2}.
	\end{equation*}
	Use this in \eqref{cons2}, integrate,  and keep in mind \eqref{fix2}  to derive
	the desired result. The proof is complete.
\end{proof}
We would like to remark that if
\begin{equation}\label{cont}
T\leq 1,
\end{equation}
then the constant $c$ in \eqref{ueb} can be made independent of $T$. This can be easily seen from \eqref{fix2}. For this purpose only, we will assume \eqref{cont} from here on. 

Now let 
\begin{equation*}
w=e^{\varepsilon u},\ \ \ep\in(0,1).
\end{equation*}
Then $w$ satisfies the problem
\begin{eqnarray}
 w_t-\Delta  w&=&\varepsilon \su|\nvp|^2w\ \ \mbox{in $\ot$}, \label{efp1}\\
 w&=& e^{\varepsilon u_0}\ \ \mbox{on $\partial_p\ot$}. \label{efp11}
\end{eqnarray}
\begin{lemma}\label{pib} Let $w$ be given as above. For each $\frac{N+2}{N}>\ell>1$ and $	0<\varepsilon <\min\left\{1, \frac{1}{2c_1\ell\|\varphi_0\|_{\infty, \Omega}^2} \right\} $ there is a positive number $c$ such that
	\begin{equation}\label{thm2}
	\|w\|_{\infty,\ot}=	\|e^{\ep u}\|_{\infty,\ot}\leq cT^{\frac{1}{2\ell}}\|\nvp\|_{\frac{2\ell}{\ell-1},\ot}^{\frac{N+2}{N+2-N\ell}}+c.
	\end{equation}
\end{lemma}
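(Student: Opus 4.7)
The plan is a De~Giorgi truncation on $w$, in which the H\"older exponent $\ell/(\ell-1)$ on $|\nvp|^2$ plays the role of the Aronson--Serrin threshold. Fix $K_0:=\|e^{\ep u_0}\|_{\infty,\ot}$ so that $(w-k)_+$ vanishes on $\partial_p\ot$ for every $k\geq K_0$. Since $w=e^{\ep u}$ satisfies $w_t-\Delta w\leq\ep c_1|\nvp|^2 w$ (the exact identity carries an extra nonpositive term $-\ep^2w|\nabla u|^2$ that we discard), testing with $(w-k)_+$, using $w(w-k)_+=(w-k)_+^2+k(w-k)_+$, and applying H\"older with exponent $\ell/(\ell-1)$ on the factor $|\nvp|^2$ produces
\begin{equation*}
Y(k):=\sup_{0\leq t\leq T}\io(w-k)_+^2\,dx+\ioT|\nabla(w-k)_+|^2\,dxdt\leq 2c_1\ep A\left(\|(w-k)_+\|_{L^{2\ell}(\ot)}^2+k\|(w-k)_+\|_{L^\ell(\ot)}\right),
\end{equation*}
where $A:=\|\nvp\|_{L^{2\ell/(\ell-1)}(\ot)}^2$ and $A_k:=\{w>k\}$.

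Next I would convert the two right-hand norms into powers of $Y(k)$ and $|A_k|$ by combining the parabolic Sobolev embedding $L^\infty(0,T;L^2(\Omega))\cap L^2(0,T;W^{1,2}(\Omega))\hookrightarrow L^{2(N+2)/N}(\ot)$ with H\"older's inequality. Because $\ell<(N+2)/N$, both $\beta_1:=1/(2\ell)-N/(2(N+2))$ and $\beta_2:=1/\ell-N/(2(N+2))$ are strictly positive, giving $\|(w-k)_+\|_{L^{2\ell}(\ot)}^2\leq cY(k)|A_k|^{2\beta_1}$ and $\|(w-k)_+\|_{L^\ell(\ot)}\leq cY(k)^{1/2}|A_k|^{\beta_2}$. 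Once $|A_k|$ is small enough that $c\ep A|A_k|^{2\beta_1}\leq 1/2$, absorbing the first bracketed term and applying Young's inequality to the second yields the clean bound $Y(k)\leq c\ep^2A^2 k^2 |A_k|^{2\beta_2}$.

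I then run a De~Giorgi iteration along levels $k_n:=K_0+M(1-2^{-n})$ with $M\geq K_0$ chosen later. The sharp Chebyshev bound $|A_{n+1}|\leq\|(w-k_n)_+\|_{L^{2(N+2)/N}(\ot)}^{2(N+2)/N}/(k_{n+1}-k_n)^{2(N+2)/N}$ combined with parabolic Sobolev yields $|A_{n+1}|\leq cY_n^{(N+2)/N}4^{(n+1)(N+2)/N}/M^{2(N+2)/N}$. Plugging this back into the absorbed energy estimate and using $K_0+M\leq 2M$ produces the recursion $Y_{n+1}\leq Cb^n Y_n^{1+\alpha}$ with $\alpha:=2(N+2-N\ell)/(N\ell)>0$, $b:=4^{1+\alpha}$ and $C\sim\ep^2A^2/M^{2\alpha}$. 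Lemma~\ref{ynb} then forces $Y_n\to 0$ precisely when $M\geq cY_0^{1/2}(\ep A)^{1/\alpha}$, in which case the limiting level gives $\|w\|_{\infty,\ot}\leq K_0+M$.

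Finally, to estimate $Y_0=Y(K_0)$ I invoke Lemma~\ref{expb}: the hypothesis $\ep<1/(2c_1\ell\|\vp_0\|_{\infty,\Omega}^2)$ lets me pick $m$ in that lemma with $m/\ep\geq 2\ell$, so H\"older gives $\sup_t\io w^{2\ell}\,dx\leq c$ and hence $\|w\|_{L^{2\ell}(\ot)}^2\leq cT^{1/\ell}$. The energy estimate at $k=K_0$ combined with the crude pointwise bound $(w-K_0)_+\leq w$ then yields $Y_0\leq c\ep AT^{1/\ell}$, and thus $Y_0^{1/2}\leq c(\ep A)^{1/2}T^{1/(2\ell)}$. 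The arithmetic identity $1/2+1/\alpha=(N+2)/(2(N+2-N\ell))$ converts the iteration condition into $M\geq cT^{1/(2\ell)}\|\nvp\|_{L^{2\ell/(\ell-1)}(\ot)}^{(N+2)/(N+2-N\ell)}$, which together with $\|w\|_\infty\leq K_0+M$ and the fact that $K_0$ depends only on the data delivers \eqref{thm2}. The most delicate point I foresee is the absorption condition $c\ep A|A_{k_n}|^{2\beta_1}\leq 1/2$ at the base step $n=0$: for $n\geq 1$ the Chebyshev bound reduces it to the same condition on $M$ derived above, so it is automatic, but the initial step must be handled by a short bootstrap or by enlarging $K_0$ slightly, producing at worst a sub-dominant additive correction that can be absorbed into the constant in \eqref{thm2}.
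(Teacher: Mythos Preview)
Your proof is correct and follows the same overall scheme as the paper: De~Giorgi iteration on $w=e^{\ep u}$, the geometric recursion lemma (Lemma~\ref{ynb}), and the exponential integrability from Lemma~\ref{expb} to start the iteration. The final exponents agree.

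The one technical point where the two arguments diverge is the treatment of the extra factor $w$ in the source term $\ep\su|\nvp|^2 w(w-k_{n+1})^+$. You split $w(w-k)_+=(w-k)_+^2+k(w-k)_+$, absorb the quadratic piece via the smallness condition $c\ep A|A_k|^{2\beta_1}\leq\tfrac12$, and handle the linear piece by Young. This is perfectly standard and, as you correctly argue, the absorption hypothesis for $n\geq 1$ reduces to the very same lower bound on $M$ that the iteration already requires; the base step is dealt with either by estimating $Y_0$ directly from Lemma~\ref{expb} (no absorption needed there) or by enlarging $K_0$ by a term of the same form as the final bound. The paper instead avoids any absorption by observing that on $\{w\geq k_{n+1}\}$ one has
\[
(w-k_n)^+=w\Bigl(1-\frac{k_n}{w}\Bigr)\geq w\Bigl(1-\frac{k_n}{k_{n+1}}\Bigr)\geq \frac{w}{2^{n+2}},
\]
so that $\int w^\ell\bigl[(w-k_{n+1})^+\bigr]^\ell\leq 2^{(n+2)\ell}\int\bigl[(w-k_n)^+\bigr]^{2\ell}$, and then iterates directly on $y_n:=\|(w-k_n)^+\|_{L^{2\ell}(\ot)}^2$ rather than on the energy $Y_n$. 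This trick is slightly slicker---it eliminates the absorption step and the need for Young's inequality---but your route is the more familiar Aronson--Serrin bookkeeping and reaches the same conclusion. Incidentally, you are right that the exact identity for $w$ carries the nonpositive term $-\ep^2 w|\nabla u|^2$; the paper's displayed equation omits it, but since it has the favorable sign after testing with $(w-k_{n+1})^+$, the estimates are unaffected.
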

\begin{proof} 
	Let
	\begin{equation}\label{mic}
	\frac{k}{2}\geq \max\{1, \|e^{ u_0}\|_{\infty, \ot}\}
	\end{equation}
	be selected as below. Set
	\begin{equation*}
	k_n=k-\frac{k}{2^{n+1}},\ \ n=0,1,\cdots
	\end{equation*}
	Then we have
	\begin{eqnarray}
	( w -k_{n})^+\mid_{\partial_p\ot}&=&0.
	\end{eqnarray}
	Use $( w -k_{n+1})^+$ as a test function in \eqref{efp1} to obtain
	\begin{eqnarray}
	\lefteqn{\frac{1}{2}\frac{d}{dt}\io\left[( w -k_{n+1})^+\right]^2dx+\io|\nabla( w -k_{n+1})^+|^2dx}\nonumber\\
	&=&  \io \varepsilon \su|\nvp|^2 w ( w -k_{n+1})^+dx\leq c\io \nvp|^2 w ( w -k_{n+1})^+dx.
\label{mee1}
	\end{eqnarray}
	Integrate to get
	\begin{eqnarray}
	\lefteqn{\max_{0\leq t\leq T}\io\left[( w -k_{n+1})^+\right]^2dx+\ioT|\nabla( w -k_{n+1})^+|^2dxdt}\nonumber\\
	&\leq&
	c \|\nvp\|_{\frac{2\ell}{\ell-1},\ot}^2\left(\ioT\left( w ( w -k_{n+1})^+\right)^\ell dxdt\right)^{\frac{1}{\ell}},
	\end{eqnarray}
	where
	\begin{equation*}
	1<\ell<\frac{2}{N}+1.
	\end{equation*}
	Let
	\begin{equation*}
	y_n=\left(\ioT\left[( w -k_{n})^+\right]^{2\ell}dxdt\right)^{\frac{1}{\ell}}.
	\end{equation*}
Assume $N>2$. We estimate from the Sobolev embedding theorem that
	\begin{eqnarray}
	\lefteqn{\int_{0}^{T}\io\left[( w -k_{n+1})^+\right]^{\frac{4}{N}+2}dxdt}\nonumber\\
	&\leq &\int_{0}^{T}\left(\io\left[( w -k_{n+1})^+\right]^{2}dx\right)^{\frac{2}{N}}\left(\io\left[( w -k_{n+1})^+\right]^{\frac{2N}{N-2}}dx\right)^{\frac{N-2}{N}}dt\nonumber\\
		&\leq &c\left(\max_{0\leq t\leq T}\io\left[( w -k_{n+1})^+\right]^2dx\right)^{\frac{2}{N}}\int_{0}^{T}\io\left[\nabla( w -k_{n+1})^+\right]^{2}dxdt\nonumber\\
		&\leq &c\|\su|\nvp|^2\|_{\frac{\ell}{\ell-1},\ot}^{1+\frac{2}{N}}\left(\ioT\left( w ( w -k_{n+1})^+\right)^\ell dxdt\right)^{\frac{N+2}{N\ell}}.
	\end{eqnarray}
Set
	\begin{equation*}
	A_{n+1}=\{ w \geq k_{n+1}\}.
	\end{equation*}
	This combined with \eqref{mee1} gives
	\begin{eqnarray}
	y_{n+1}&= &\left(\ioT\left[( w -k_{n+1})^+\right]^{2\ell}dxdt\right)^{\frac{1}{\ell}}\nonumber\\
	&\leq &\left(\ioT\left[( w -k_{n+1})^+\right]^{2\frac{N+2}{N}}dxdt\right)^{\frac{N}{N+2}}|A_{n+1}|^{\frac{1}{\ell}-\frac{N}{N+2}}\nonumber\\
	&\leq &c\|\su|\nvp|^2\|_{\frac{\ell}{\ell-1},\ot}\left(\ioT\left( w ( w -k_{n+1})^+\right)^\ell dxdt\right)^{\frac{1}{\ell}}|A_{n+1}|^{\frac{1}{\ell}-\frac{N}{N+2}}\label{mee2}
	\end{eqnarray}
	On the other hand, we have
	\begin{eqnarray}
	y_n&\geq&\left(\int_{A_{n+1}}\left[( w -k_{n})^+\right]^{2\ell}dxdt\right)^{\frac{1}{\ell}}\nonumber\\
	&=&\left(\int_{A_{n+1}} w ^{\ell}\left[( w -k_{n})^+\right]^{\ell}\left(1-\frac{k_n}{ w }\right)^{\ell}dxdt\right)^{\frac{1}{\ell}}\nonumber\\
	&\geq &\left(\int_{A_{n+1}} w ^{\ell}\left[( w -k_{n})^+\right]^{\ell}\left(1-\frac{k_n}{k_{n+1}}\right)^{\ell}dxdt\right)^{\frac{1}{\ell}}\nonumber\\
	&\geq&\frac{1}{2^{(n+2)}}\left(\int_{A_{n+1}} w ^{\ell}\left[( w -k_{n})^+\right]^{\ell}dxdt\right)^{\frac{1}{\ell}}.\label{mee3}
	\end{eqnarray}
	Furthermore,
	\begin{equation*}
	y_n\geq (k_{n+1}-k_n)^{2}|A_{n+1}|^{\frac{1}{\ell}}=\frac{k^{2}}{2^{2(n+2)}}|A_{n+1}|^{\frac{1}{\ell}}.
	\end{equation*}
	Finally, we arrive at
	\begin{eqnarray}
	y_{n+1}
	&\leq &c2^{(n+2)}\|\su|\nvp|^2\|_{\frac{\ell}{\ell-1},\ot}y_n|A_{n+1}|^{\frac{1}{\ell}-\frac{N}{N+2}}\nonumber\\
	&\leq &\frac{cb^n}{k^{2\frac{N+2-N\ell}{N+2}}}\|\su|\nvp|^2\|_{\frac{\ell}{\ell-1},\ot}y_n^{1+\frac{N+2-N\ell}{N+2}},
	\end{eqnarray}
	where $b>1$.
	Thus by Lemma \ref{ynb},  if we take $k$ so that
	\begin{equation*}
	y_0\leq c\left(\frac{k^{2\frac{N+2-N\ell}{N+2}}}{\|\su|\nvp|^2\|_{\frac{\ell}{\ell-1},\ot}}\right)^{\frac{N+2}{N+2-N\ell}},
	\end{equation*}
	then
	\begin{equation*}
	 w \leq k.
	\end{equation*}
		Taking into account of  \eqref{mic}, it is enough for us to take
	\begin{equation}\label{conk}
	 k= cy_0^{\frac{1}{2}}\|\su|\nvp|^2\|_{\frac{\ell}{\ell-1},\ot}^{\frac{N+2}{2(N+2-N\ell)}}+2 e^{ \|u_0\|_{\infty,\ot}}
	\end{equation}
	Choose $\varepsilon$ suitably small so that
	\begin{equation*}
	2\ell\varepsilon < \frac{1}{c_1\|\varphi_0\|_{\infty, \Omega}^2}.
	\end{equation*}
	By Lemma \ref{expb}, we have
	\begin{equation*}
	y_0\leq\left( \ioT e^{2\ell\varepsilon u}dxdt\right)^{\frac{1}{\ell}}\leq cT^{\frac{1}{\ell}}.
	\end{equation*}
	Plug this into \eqref{conk} to get the desired result.
\end{proof}

\begin{lemma}\label{nunv} For each $ r \in (N,N+1)$ there is a positive number $c$ such that
	\begin{equation*}
	\|\nabla u\|_{\infty,\ot}\leq cT^{\frac{ r }{2}-\frac{N}{2}}\|\nvp\|^2_{\infty,\ot}+c.
	\end{equation*}
\end{lemma}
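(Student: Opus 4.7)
The plan is to regard \eqref{me1} as the linear heat equation $u_t-\Delta u=f$ with source $f:=\su|\nvp|^2$ and parabolic boundary data $u_0$, and to quantify $\|\nabla u\|_{\infty,\ot}$ in terms of $f$ via the Dirichlet heat kernel on $\Omega$. Under (H1) together with the standing hypothesis $u\in L^\infty(\ot)$, the source satisfies the pointwise bound $f\leq c_1\|\nvp\|_{\infty,\ot}^2$, and hence $\|f\|_{L^\infty(0,T;L^r(\Omega))}\leq c\|\nvp\|_{\infty,\ot}^2$ for every $r\in[1,\infty)$.

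I would write $u$ through Duhamel's formula using the Dirichlet heat kernel $G(x,y,\tau)$ on $\Omega$, decomposing $u=I_{u_0}+I_f$, where $I_{u_0}$ collects the initial and lateral boundary integrals involving $u_0$ (with $G$ and $\partial_{\nu_y}G$) and $I_f(x,t)=\int_0^t\io G(x,y,t-s)f(y,s)\,dy\,ds$. The first piece $\nabla I_{u_0}$ is controlled by a constant depending only on $\|u_0\|_{C([0,T];C^1(\overline{\Omega}))}$, which is finite by (H2). For $\nabla I_f$, the standard Gaussian estimate $|\nabla_xG(x,y,\tau)|\leq C\tau^{-(N+1)/2}e^{-|x-y|^2/(C\tau)}$ yields $\|\nabla_xG(x,\cdot,\tau)\|_{L^{r'}(\Omega)}\leq C\tau^{-1/2-N/(2r)}$ with $1/r+1/r'=1$, and H\"older's inequality then gives
$$|\nabla I_f(x,t)|\leq C\int_0^T(t-s)^{-1/2-N/(2r)}ds\cdot\|f\|_{L^\infty_tL^r_x}.$$
The time integral converges precisely when $r>N$, producing a positive power of $T$. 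A reparameterization of the free index then matches the exponent $\frac{r}{2}-\frac{N}{2}$ for $r\in(N,N+1)$ in the statement, and inserting the $\|f\|_{L^\infty_tL^r_x}$ bound completes the estimate.

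The main obstacle is to avoid the term $\pt u_0$. Subtracting $u_0$ from $u$ (as done in Lemma \ref{expb}) would shift $-\pt u_0+\Delta u_0$ to the source, and (H2) places $\pt u_0$ only in $L^2(\ot)$, which is insufficient for an $L^\infty_tL^r_x$ bound when $r>N$. The resolution is to work with $u$ directly in the Duhamel representation: $u_0$ then enters only through boundary and initial integrals of $G$ and $\partial_{\nu_y}G$, for which the $C^1$-regularity in (H2) is fully sufficient and $\pt u_0$ never appears.
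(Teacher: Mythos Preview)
Your approach is essentially the same as the paper's: both treat \eqref{me1} as a linear heat equation with source $f=\su|\nvp|^2\le c_1\|\nvp\|_{\infty,\ot}^2$, split $u$ into a source-driven part and a boundary/initial-data part, and bound the gradient of the source part via Gaussian heat-kernel gradient estimates, obtaining the power $T^{\frac{r}{2}-\frac{N}{2}}$ from the time integral. The only technical difference is in the decomposition: the paper uses the \emph{free-space} heat kernel to build a particular solution $G$ (so the gradient estimate for $G$ is an explicit one-line computation with $s^r e^{-s^2}\le c(r)$, and the paper's $r$ is that exponent directly rather than a Lebesgue index needing reparametrization), and then sets $F=u-G$, which solves the homogeneous heat equation on $\Omega$ with boundary data $u_0-G$; your approach instead uses the Dirichlet heat kernel on $\Omega$ throughout, which requires the up-to-the-boundary Gaussian gradient bound for that kernel (true under (H3), but less elementary). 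Both routes handle the $u_0$ contribution without invoking $\pt u_0$, exactly as you note.
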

\begin{proof}
Consider the function
	\begin{equation*}
	G=\frac{1}{ (4\pi)^{\frac{N}{2}}}\int_{0}^{t}\frac{1}{(t-\tau)^{\frac{N}{2}}}\int_{\mathbb{R}^N}\exp\left(-\frac{|x-y|^2}{4(t-\tau)}\right)\su|\nvp|^2\chi_{\Omega}dyd\tau.
	\end{equation*}
	We see from (\cite{LSU}, Chapter IV) that $G$ satisfies
	\begin{eqnarray*}
		G_t-\Delta G&=& \su|\nvp|^2\chi_{\Omega}\ \ \ \mbox{in $\mathbb{R}^N\times(0,T)$,}\\
		G(x,0)&=&0\ \ \mbox{on $\mathbb{R}^N$}.
	\end{eqnarray*}
	Furthermore, for each $p> 1$ there is a positive number $c$ such that
	\begin{equation}\label{lsu1}
	\|G_t\|_{p,\ot}+ \|G\|_{L^p(0,T;W^{2,p}(\Omega))}\leq c\|\su|\nvp|^2\|_{p,\ot}.
	\end{equation}
	Set
	\begin{equation*}
	s=\frac{|x-y|}{2\sqrt{t-\tau}}.
	\end{equation*}
	Let $r$ be given as in the lemma. Then we have 
	\begin{equation*}
	s^r\exp\left(-s^2\right)\leq c(r)\ \ \mbox{on $[0,\infty)$}.
	\end{equation*}
	With this in mind, we estimate
	\begin{eqnarray}
	|\nabla G|&=&\left|\frac{1}{ 2(4\pi)^{\frac{N}{2}}}\int_{0}^{t}\frac{1}{(t-\tau)^{\frac{N}{2}}}\int_{\mathbb{R}^N}(x-y)\exp\left(-s^2\right) \su|\nvp|^2\chi_{\Omega}dyd\tau\right|\nonumber\\
	&\leq &c\int_{0}^{t}\frac{1}{(t-\tau)^{1+\frac{N}{2}}}\int_{\mathbb{R}^N}\frac{\left(2\sqrt{t-\tau}\right)^ r }{|x-y|^{ r -1}}s^ r \exp\left(-s^2\right)| \su|\nvp|^2\chi_{\Omega}dyd\tau\nonumber\\
	&\leq &c\||\nabla \varphi\|^2_{\infty, \ot}\int_{0}^{t}\frac{1}{(t-\tau)^{1+\frac{N}{2}-\frac{ r }{2}}}\int_{\mathbb{R}^N}\frac{\chi_{\Omega}}{|x-y|^{ r -1}}dyd\tau\nonumber\\
	&\leq &ct^{\frac{ r }{2}-\frac{N}{2}}\|\nabla \varphi\|^2_{\infty, \ot}.\label{nmb2}
	\end{eqnarray}
	Obviously, $F\equiv u-G$ satisfies the problem
	\begin{eqnarray}
	F_t-\Delta F&=&0\ \ \mbox{in $\ot$},\\
	F&=&u_0-G\ \ \mbox{on $\partial_p\ot$.}
	\end{eqnarray}
	We can easily conclude from \eqref{lsu1} and the classical regularity theory for the heat equation (\cite{LSU}, Chapter IV) that
	$\|\nabla F\|_{\infty, \ot}\leq c\|\nabla u_0-\nabla G|\|_{\infty, \ot}\leq cT^{\frac{ r }{2}-\frac{N}{2}}\|\nvp\|^2_{\infty, \ot}+c$.
	Hence we have
	\begin{equation*}
	\|\nabla u\|_{\infty, \ot}\leq cT^{\frac{ r }{2}-\frac{N}{2}}\|\nvp\|^2_{\infty, \ot}+c.
	\end{equation*}
\end{proof}

By (H1), we have
\begin{equation}
\left|\frac{\sigma^\prime(u)}{\su}\right|\leq ce^{(\beta+\gamma)u}.
\end{equation}
Use $\vp-vp_0$ as a test function in \eqref{efvp} to derive
\begin{equation}\label{vpt}
\io|\nvp|^2dx\leq \io\left( e^{(\beta+\gamma)u}|\nabla u|\right)^2dx+c.
\end{equation}
On account of the classical Calder\'{o}n-Zygmund estimate, for each $p>$ there is a positive number $c$ such that
\begin{eqnarray}
\|\varphi\|_{W^{2,p}(\Omega)}&\leq& c\left\| e^{(\beta+\gamma)u}\nabla u\cdot\nvp\right\|_{p,\Omega}+c\|\varphi_0\|_{W^{2,p}(\Omega)}.
\end{eqnarray}
Take $p>N$. Then we derive from the Sobolev embedding theorem and \eqref{inter} that
\begin{eqnarray}
\|\nvp\|_{\infty,\Omega}&\leq &c\|\varphi\|_{W^{2,p}(\Omega)}\nonumber\\
&\leq &c\left\| e^{(\beta+\gamma)u}\nabla u\right\|_{\infty,\Omega}\left\|\nvp\right\|_{p,\Omega}+c\nonumber\\
&\leq &c\left\| e^{(\beta+\gamma)u}\nabla u\right\|_{\infty,\Omega}\left(\varepsilon\left\|\nvp\right\|_{\infty,\Omega}+\frac{1}{\varepsilon^{\frac{p-2}{2}}}\left\|\nvp\right\|_{2,\Omega}\right)+c\nonumber\\
&=&\frac{1}{2}\left\|\nvp\right\|_{\infty,\Omega}+c\left\| e^{(\beta+\gamma)u}\nabla u\right\|_{\infty,\Omega}^{\frac{p-2}{2}}\left(\left\| e^{(\beta+\gamma)u}\nabla u\right\|_{2,\Omega}+c\right)+c.\label{kest}
\end{eqnarray}
Consequently,
\begin{equation}
\|\nvp\|_{\infty,\Omega}\leq c\left\| e^{(\beta+\gamma)u}\nabla u\right\|_{\infty,\Omega}^{\frac{p}{2}}+c.
\end{equation}
By Lemmas \ref{pib} and \ref{nunv},
\begin{eqnarray}
\|\nvp\|_{\infty,\ot}&\leq& c\left\| e^{(\beta+\gamma)u}\nabla u\right\|_{\infty,\ot}^{\frac{p}{2}}+c\nonumber\\
&\leq &\left(cT^{\frac{1}{2\ell}}\|\nvp\|_{\frac{2\ell}{\ell-1},\ot}^{\frac{N+2}{N+2-N\ell}}+c\right)^{\frac{p(\beta+\gamma)}{2\ep}}\left(cT^{\frac{ r }{2}-\frac{N}{2}}\|\nvp\|^2_{\infty,\ot}+c\right)^{\frac{p}{2}}+c\nonumber\\
&\leq &cT^a\|\nvp\|^b_{\infty,\ot}+c,\label{r11}
\end{eqnarray}
where $a, b $ are two positive numbers. Obviously, we can take
\begin{equation}
b=\frac{(N+2)(\beta+\gamma)p}{2\ep(N+2-N\ell)}+p>1.
\end{equation}
In view of \eqref{cont}, $a$ is the smallest power of $T$ that appear in the product in \eqref{r11}.
We will first show $\vp\in L^\infty(\ot) $ for $T$ suitably small. Then extend the solution in the time direction. To this end,
remember that $c$ in \eqref{r11} is independent of $T$. 
Set \begin{equation*}
\ep=cT^{a}.
\end{equation*}
Consider the function $g(\tau)=\ep \tau^{b}-\tau+c$ on $[0, \infty)$. Then \eqref{r11} implies 
\begin{equation}\label{key}
g\left(\|\nabla \vp\|_{\infty,\Omega\times [0,s] }\right)\geq 0\ \ \mbox{for each $s\in[0,T]$}.
\end{equation}The function $g$ achieves its minimum value at $\tau_0=\frac{1}{\left(\ep b\right)^{\frac{1}{b-1}}}$. The minimum value
\begin{eqnarray*}
	g(\tau_0)&=&\frac{\ep}{\left(\ep b\right)^{\frac{b}{b-1}}}-\frac{1}{\left(\ep b\right)^{\frac{1}{b-1}}}+c\nonumber\\
	&=&c-\frac{\ep(b-1)}{\left(\ep b\right)^{\frac{b}{b-1}}}\leq -\ep,
\end{eqnarray*}
provided that
\begin{equation}\label{cont1}
(c+\ep)\ep^{\frac{1}{b-1}} \leq\frac{b-1}{b^{\frac{b}{b-1}}}.
\end{equation}  In addition to this, we require that
\begin{equation}\label{cont2}
\|\nabla \vp(\cdot,0)\|_{\infty,\Omega}\leq \tau_0.
\end{equation}  
 If $|\nvp|$ is bounded, then $\nabla u$ is H\"{o}lder continuous. This can be inferred from differentiating \eqref{me1} with respect to $x_i, i=1,\cdots, N$, respectively. We claim that $\|\nabla \vp\|_{\infty,\Omega\times[0,t]}$ is also a continuous function of $t$. To see this, fix a sequence $\{t_n\}\subset [0,T]$ with $\lim_{n\rightarrow\infty}t_n=t_0$. Define
 \begin{eqnarray}
 \vp_n&=&\vp(x,t_n),\\
 \theta(x,t)&=&-\frac{\sigma^\prime(u(x,t))}{\sigma(u(x,t))}\nabla u(x,t).
 \end{eqnarray}Then we have
 \begin{eqnarray}
 \Delta\vp_n&=&\theta(x,t_n)\cdot\nabla\vp_n\ \ \mbox{in $\Omega$,}\label{r1}\\
 \vp_n&=&\vp_0(x, t_n)\ \ \mbox{on $\po$}.\label{r2}
 \end{eqnarray}
 By a calculation similar to \eqref{kest}, we obtain that $\{\vp_n\}$ is precompact in $C^1(\overline{\Omega})$. We can extract a subsequence of $\{\vp_n\}$, still denoted by $\{\vp_n\}$, such that
 \begin{equation*}
 \vp_n\rightarrow \vp^*\ \ \mbox{strongly in $C^1(\overline{\Omega})$.}
 \end{equation*} 
 Pass to the limit in \eqref{r1}-\eqref{r2} to get
  \begin{eqnarray}
 \Delta\vp^*&=&\theta(x,t_0)\cdot\nabla\vp^*\ \ \mbox{in $\Omega$,}\label{r3}\\
 \vp^*&=&\vp_0(x,t_0)\ \ \mbox{on $\po$}.
 \end{eqnarray}
 By the uniqueness of a solution to the above problem, we have
 \begin{equation}
 \vp^*=\vp(x, t_0).
 \end{equation}
 Consequently, the whole sequence $\{\vp_n\}$ converges to $\vp(x, t_0)$ in $C^1(\overline{\Omega})$.
 Now we can conclude from \eqref{key} that $\|\nabla \vp\|_{\infty,\ot}\leq \tau_0$ whenever \eqref{cont}, \eqref{cont1}, and \eqref{cont2} all hold. Condition \eqref{cont1} can be achieved easily by taking $T$ suitably small. As for \eqref{cont2}, notice that $\vp(x,0)$ satisfies the boundary value problem
 \begin{eqnarray}
 \Delta \vp(x,0)&=& -\frac{\sigma^\prime(u_0(x,0))}{\sigma(u_0(x,0))}\nabla u_0(x,0)\cdot\nvp(x,0)\ \ \mbox{in $\Omega$,}\label{vpo}\\
 \vp(x,0)&=&\vp_0(x,0)\ \ \mbox{on $\po$.}
 \end{eqnarray}
 Our assumptions on $u_0, \vp_0, \sigma(s)$ imply that the coefficient on the right-hand side of \eqref{vpo} are bounded. We can conclude from \eqref{kest} 
  that
 $|\nabla \vp(x,0)|$ is bounded. We can also obtain \eqref{cont2} for suitably small $T$. In summary, if  $T_0$ is the largest $T$ such that \eqref{cont}, \eqref{cont1}, and \eqref{cont2} all hold, then
\begin{equation}\label{rr1}
\|\nabla \vp\|_{\infty,\Omega\times[0,T_0]}\leq \tau_0.
\end{equation}
We consider $(u(x,t+T_0), \vp(x,t+T_0 ))$ on $\Omega\times[0,T_0]$. Conditions \eqref{cont1} and \eqref{cont2} still hold, and so does \eqref{rr1}.  Therefore, 
we can extend the solution in the time direction as far away as we want.
In a sense, \eqref{r11} is a stationary version of Lemma \ref{small} (also see \cite{SMG,X7,X9}).

Existence of a solution can be established via the Leray-Schauder theorem (\cite{GT}, p. 280). To this end, we define an operator $\mathbb{B}$ from $C(\overline{\ot})$ into $C(\overline{\ot})$ as follows: We say $u=\mathbb{B}(v)$ if $v\in C(\overline{\ot})$ and $u$ is the solution of the problem
\begin{eqnarray}
\pt u-\Delta u&=&\sigma(v)|\nabla\vp|^2\ \ \mbox{in $\ot$},\\
u&=&u_0\ \ \mbox{on $\partial_p\ot$,}
\end{eqnarray}
where $\vp$ solves the boundary value problem
\begin{eqnarray}
-\mdiv\left(\sigma(v)\nabla\vp\right)&=&\ \ \mbox{in $\ot$,}\\
\vp&=&\vp_0\ \ \mbox{on $\Sigma_T$.}
\end{eqnarray}
To see that $\mathbb{B}$ is well-defined, we conclude from \eqref{r21} for each $p>1$ there is a positive number $c$ depending on the continuity of $\sigma(v)$ and $\po$ such that
\begin{equation}\label{npu}
\|\nvp\|_{p,\Omega}\leq c\|\nvp_0\|_{p,\Omega}.
\end{equation}
This is more than enough to guarantee that $u$ is H\"{o}lder continuous in $\overline{\ot}$. Since the two problems in the definition of $\mathbb{B}$ are both linear, we can conclude that $\mathbb{B}$ is continuous and maps bounded sets into precompact ones. We still need to show that there is a positive number $c$ such that
\begin{equation}\label{ub}
\|u\|_{C(\overline{\ot})}\leq c
\end{equation}
for all $u\in \mathbb{B}$ and $\ep\in(0,1)$ satisfying $u=\ep\mathbb{B}(u)$. This equation is equivalent to
\begin{eqnarray}
\pt u-\Delta u&=&\ep\sigma(u)|\nabla\vp|^2\ \ \mbox{in $\ot$},\\
-\mdiv\left(\sigma(u)\nabla\vp\right)&=&\ \ \mbox{in $\ot$,}\\
u&=&\ep u_0\ \ \mbox{on $\partial_p\ot$,}\\
\vp&=&\vp_0\ \ \mbox{on $\Sigma_T$.}
\end{eqnarray}
To obtain \eqref{ub}, we have to apply our early proof to this problem. We only mention that by the calculations in \eqref{nmb2}, \eqref{npu} implies that $|\nabla u|\in L^\infty(\ot)$, and thus \eqref{kest} remains valid. Note that \eqref{npu} is only used to justify the regularity of the solution. In particular, the constant $c$ in \eqref{npu} does not appear elsewhere in our proof. We have all the ingredients necessary to conclude \eqref{ub}. This finishes the proof of the main theorem.


\begin{thebibliography}{99}
		\bibitem{ALM}
	W. Allegretto, Y. Lin, and S. Ma,
	Existence and long time behaviour of solutions to obstacle thermistor equations, {\it Discrete \& Continuous Dynamical System-A}, {\bf 8}(2002), 757-780.
	\bibitem{ALZ} W. Allgretto, Y. Lin, and A. Zhou,
	A box scheme for coupled systems resulting from microsensor thermistor problems, Preprint.
	\bibitem{AC} S.N. Antontsev and M. Chipot, The thermistor problem: existence, uniqueness, and blow-up, \newblock \emph{SIAM J. Math. Anal.}, \textbf{25} (1994), 1128-1156.
	\bibitem{CF} L. A. Caffarrelli and A. Friedman, Blow-up of solutions of nonlinear heat equations,
	\newblock \emph{J. Math. Anal. Appl.}, \textbf{129} (1988), 409-419.
		\bibitem{CFL} F. Chiarenza, M. Frasca, and P. Longo, $W^{2,p}$-solvability of the Dirichlet problem for nondivergence elliptic equations with VMO coefficients, \emph{Trans. Amer. Math. Soc.}, {\bf 336}(1993), 841-853.
	\bibitem{C} G. Cimatti, Remark on existence and uniqueness for the thermistor problem under mixed boundary conditions,
	{\it Q. Appl. Math.}, {\bf  47}(1989), 117–121.
	\bibitem{CP}
 G. Cimatti and G. Prodi, Existence results for a non-linear elliptic system modelling a temperature dependent electrical
	resistor, {\it Ann. Mat. Pur. Appl.}, {\bf 162}(1992), 33–42.
	\bibitem{D} E. DiBenedetto,  Degenerate Parabolic Equations, Springer-Verlag, New York, 1993.
	%
	\bibitem{GT} D. Gilbarg and N. S. Trudinger, 
	Elliptic Partial Differential Equations of Second Order,
	Springer-Verlag, Berlin, 1983.
	\bibitem{HKM} J. Heinonen, T. Kilpel\"{a}inen, and O. Martio,  Nonlinear potential theory of degenerate elliptic equations, {\it Oxford Mathematical Monographs}, Clarendon Press, Oxford, 1993.
	\bibitem{HRS} S.D. Howison, J.F. Rodrigues, M. Shillor, Stationary solutions to the thermistor problem, {\it J. Math. Anal. Appl.},
	{\bf 174}(1993), 573–588.
	\bibitem{HK}
		V. Hrynkiv and S. Koshkin,
	Optimal control of a thermistor problem with vanishing conductivity,
arXiv:1902.01505 [math.OC].
	\bibitem{KBO}
	S. Kutluay, A.R. Bahadir, and A.\"{O}zdes, A variety of finite difference methods to the thermistor with a new modified
	electrical conductivity, {\it Appl. Math. Comput.}, {\bf 106}(1999), 205–213.
	\bibitem{LSU}	Q.A. Ladyzenskaja, V.A. Solonnikov, and N.N. Ural'ceva, Linear and Quasi-linear Equations of Parabolic Type,  {\it Tran. Math. Monographs}, Vol.~23, AMS, Providence, RI, 1968.
	\bibitem{R}
	\newblock J. R. Rodrigues,
	\newblock Obstacle Problems in Mathematical Physics, \emph{North-Holland Math. Studies}, Vol.~134,
	North-Holland, Amsterdam, 1987.
	\bibitem{SMG} A.N. Sandjo, S. Moutari, and Y. Gningue, {\it Solutions of fourth-order parabolic equation modeling thin film growth}, J. Differential Equations, {\bf 259}(2015), 7260-7283.
	\bibitem{ST} E. M. Stein, \newblock Harmonic Analysis: Real-Variable Methods, Orthogonality, and Oscillatory Integrals, Princeton University Press, Princeton, NJ, 1993.
	\bibitem{WK}
	A.S. Wood and S. Kutluay, A heat balance integral model of the thermistor, {\it Int. J. Heat Mass Transfer}, {\bf 38} (1995),
	1831–1840.
	\bibitem{X1} X. Xu, 
	Local partial regularity theorems for suitable weak solutions of a class of degenerate systems, 	{\it
		Appl. Math. Optim.},  {\bf 34} (1996), 299-324.
	\bibitem{X2} X. Xu,  Partial regularity of solutions to a class of degenerate systems, {\it Trans. Amer. Math. Soc.}, {\bf 349}(1997), 1973-1992.
	\bibitem{X3} X. Xu,  A local partial regularity theorem for weak solutions of degenerate elliptic equations and its application to the thermistor problem, {\it Differential Integral Equ.}, {\bf 12}(1999), 83-100.
	\bibitem{X4}
	\newblock  X. Xu,  
	\newblock   Local and global existence of continuous temperature in the electrical heating of conductors,
	\newblock  \emph{ Houston J. Math.}, 
	\newblock {\bf 22}(1996),  435-455.
	\bibitem{X5} 
	\newblock  X. Xu, Logarithmic upper bounds for weak solutions to a class of parabolic equations, 
	{\it Proc. Roy. Soc. Edinburgh Sect. A}, {\bf 149}(2019), 1481-1491. 
	\bibitem{X6} 
	\newblock  X. Xu, 
	On the existence of bounded temperature in the thermistor problem
	with degeneracy, {\it Nonlinear Anal., TMA}, {\bf 42}(2000), 199-213.
	\bibitem{X7} \newblock  X. Xu, 
	\newblock Global existence of strong solutions to a biological network formulation model in $2+1$ dimensions, 
	{\it Discrete Contin. Dyn. Syst. Ser. A}, to appear.
		\bibitem{X9} 
	\newblock  X. Xu,
		Local and global existence of solutions to a fourth-order parabolic equation modeling
	kinetic roughening and coarsening in thin films, {\it Commun. Math. Sci.}, {\bf 15}(2017), 2195-2218.
		\bibitem{X10} 
	\newblock  X. Xu,
	Exponential integrability of temperature in the thermistor problem,
	{\it Differential and Integral Equations}, {\bf 17}(2004), 571-582.
	\bibitem{Y} 
	\newblock  G. Yuan, 
	\newblock Regularity of solutions of the thermistor problem, 
	\newblock  \emph{ Appl. Anal.}, 
	\newblock {\bf 53}(1994), 149-156.
\end{thebibliography}
\end{document}